\documentclass[11pt]{amsart}

\usepackage[T1]{fontenc}
\usepackage[latin1]{inputenc}
\usepackage[english]{babel}
\usepackage{amsmath,amssymb,amsthm} 
\usepackage{enumitem}
\usepackage{color}
\usepackage[margin=1in]{geometry}
\usepackage{thmtools}
\usepackage{hyperref}
\usepackage[nameinlink]{cleveref}

\newtheorem{theorem}{Theorem}
\newtheorem{lemma}[theorem]{Lemma}

\theoremstyle{definition}

\newtheorem{remark}[theorem]{Remark}

\newtheorem*{remark*}{Remark}

\newcommand{\diag}{\operatorname{diag}}
\newcommand{\vectwo}[2]{\begin{pmatrix} #1 \\ #2 \end{pmatrix}}
\renewcommand{\MR}[1]{}

\title{On the Cotlar--Stein lemma}
\author[M. Hartz]{Michael Hartz}
\thanks{M.H. was partially supported by the Emmy Noether Program of the German Research Foundation (DFG Grant 466012782)}
\author[M. Scherer]{Marcel Scherer}
\address{Fachrichtung Mathematik, Universit\"at des Saarlandes, 66123 Saarbr\"ucken, Germany}
\email{hartz@math.uni-sb.de}
\address{Technion Israel Institute of Technology, Technion City, Haifa, 3200003, Israel}
\email{scherer@math.uni-sb.de}
\date{\today}

\subjclass[2020]{Primary: 47A63; Secondary 47B90}
\keywords{Cotlar--Stein lemma, almost orthogonality, Schur test}

\begin{document}

\begin{abstract}
  We give a direct proof of the Cotlar--Stein lemma, which does not rely on the power trick.
\end{abstract}

\maketitle

The goal of this note is to give a direct proof of the following result.

\begin{theorem}[Cotlar--Stein lemma]
  \label{thm:cotlar-stein}
  Let $T_1,\ldots,T_n$ be bounded linear operators on a Hilbert space. Then
  \begin{equation*}
    \Big\| \sum_{k=1}^n T_k \Big\|^2 \le \Big( \max_{j=1,\ldots,n} \sum_{k=1}^n \sqrt{\|T_j^* T_k\|} \Big) \Big( \max_{j=1,\ldots,n} \sum_{k=1}^n \sqrt{\|T_j T_k^*\|} \Big).
  \end{equation*}
\end{theorem}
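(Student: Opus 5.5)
We will prove the theorem by a Schur test for operator matrices, avoiding the power trick; the paper's keywords, ``almost orthogonality'' and ``Schur test'', point the same way. Write $T=\sum_k T_k$ and let $H$ be the Hilbert space. Factor $T$ through $H^n$ as $T = R C$, where $C\colon H\to H^n$ and $R\colon H^n\to H$. A naive choice such as $C x = (T_k x)_k$ would give $\|C\|^2=\|\sum T_k^*T_k\|$, which is too crude. Instead we use polar decompositions $T_k = V_k |T_k|$, with $|T_k|=(T_k^*T_k)^{1/2}$ and $|T_k^*| = (T_kT_k^*)^{1/2}$, and split each operator into two square-root pieces:
\[
T_k = |T_k^*|^{1/2} V_k |T_k|^{1/2}.
\]
Define $C x = (V_k|T_k|^{1/2}x)_k$ and $R (y_k)_k = \sum_k |T_k^*|^{1/2} y_k$. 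Then $RC=T$, so $\|T\|^2\le \|C\|^2\|R\|^2$.

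Next we compute the two factors. First,
\[
\|C\|^2=\Big\|\sum_k |T_k|^{1/2}V_k^*V_k|T_k|^{1/2}\Big\|=\Big\|\sum_k |T_k|\Big\|.
\]
Second, $\|R\|^2=\|RR^*\|=\|\sum_k |T_k^*|\|$. It therefore suffices to show
\[
\Big\|\sum_k |T_k|\Big\| \le \max_j\sum_k \sqrt{\|T_j^*T_k\|},
\]
together with the analogous bound for $\sum_k|T_k^*|$. The analogous bound follows by applying the first one to the $T_k^*$.

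To prove this bound, write $\sum_k |T_k| = D^*D$, where $D\colon H\to H^n$ is $Dx=(|T_k|^{1/2}x)_k$. Then $\|\sum_k|T_k|\|=\|DD^*\|$, and $DD^*$ is the $n\times n$ operator matrix with entries $|T_j|^{1/2}|T_k|^{1/2}$. The Schur test for operator matrices, $\|(A_{jk})\|\le \max_j\sum_k\|A_{jk}\|$ for self-adjoint matrices (with $A_{kj}=A_{jk}^*$), then gives
\[
\Big\|\sum_k |T_k|\Big\|\le\max_j\sum_k\big\||T_j|^{1/2}|T_k|^{1/2}\big\|.
\]
The Schur test itself follows from Cauchy--Schwarz:
\[
|\langle Ax,x\rangle|\le\sum_{j,k}\|A_{jk}\|\|x_j\|\|x_k\|\le\max_j\sum_k\|A_{jk}\|\,\|x\|^2 .
\]

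The main obstacle is the remaining entrywise estimate
\[
\big\||T_j|^{1/2}|T_k|^{1/2}\big\|\le\|T_j^*T_k\|^{1/2}.
\]
Note that $\|T_j^*T_k\| = \||T_j||T_k|\|$, because the partial isometries $V_j$ and $V_k$ act isometrically on the relevant ranges. The needed inequality is then $\|A^{1/2}B^{1/2}\|\le\|AB\|^{1/2}$ for positive operators $A,B$. This is the Heinz / Cordes inequality $\|A^sB^s\|\le\|AB\|^s$ for $0\le s\le1$. I would prove it directly via the standard argument: the set of $s\in[0,1]$ for which it holds contains $0$ and $1$ and is closed under midpoints. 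Midpoint closure uses
\[
\|A^{(s+t)/2}B^{(s+t)/2}\|^2=\|B^{(s+t)/2}A^{s+t}B^{(s+t)/2}\|=r\big(A^{s}B^{s+t}A^{t}\big)\le \|A^sB^s\|\,\|B^tA^t\|,
\]
by the spectral radius bound, and continuity then gives all of $[0,1]$; only $s=1/2$ is actually needed. Combining the three steps yields the theorem.
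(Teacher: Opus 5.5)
Your overall route is the paper's, and one step in it is false. The paper's route has three parts:
\begin{itemize}
\item the polar-decomposition reduction $\|\sum_k T_k\|^2\le\|\sum_k|T_k|\|\,\|\sum_k|T_k^*|\|$. Your factorization $T=RC$ through $H^n$ is equivalent to the paper's positive $2\times 2$ block matrix, which is $XX^*$ for the column $X$ with entries $C^*$ and $R$;
\item the row operator that turns $\|\sum_k|T_k|\|$ into the norm of the operator matrix $(|T_j|^{1/2}|T_k|^{1/2})$;
\item the inequality $\|A^{1/2}B^{1/2}\|\le\|AB\|^{1/2}$. Your midpoint step with $s=0$, $t=1$ is exactly the paper's spectral-radius proof, so the Heinz--Cordes interpolation is unnecessary.
\end{itemize}
Applying an operator-valued Schur test directly, instead of first passing to the scalar matrix $(\|A_{jk}\|)$, is fine. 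It only bypasses the sharper Calder\'on--Vaillancourt bound.

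The false step is the identity you use to finish: in fact $\||T_j|\,|T_k|\|=\|T_jT_k^*\|$, not $\|T_j^*T_k\|$. The partial isometries can be stripped from $T_jT_k^*=V_j|T_j|\,|T_k|V_k^*$, where they sit on the outside. They cannot be stripped from $T_j^*T_k=|T_j|V_j^*V_k|T_k|$, where $V_j^*V_k$ sits in the middle.

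Here is a counterexample on $\mathbb{C}^2$ with matrix units, $T_1=E_{11}$ and $T_2=E_{21}$:
\begin{itemize}
\item $T_1^*T_2=0$, but $|T_1|=|T_2|=E_{11}$, so $\||T_1|^{1/2}|T_2|^{1/2}\|=1$. Your entrywise estimate $\||T_j|^{1/2}|T_k|^{1/2}\|\le\|T_j^*T_k\|^{1/2}$ therefore fails.
\item $\|\sum_k|T_k|\|=2$ while $\max_j\sum_k\sqrt{\|T_j^*T_k\|}=1$. So your intermediate bound for $\|\sum_k|T_k|\|$ is false.
\item Its adjoint version for $\|\sum_k|T_k^*|\|$ fails likewise (take $T_1=E_{11}$, $T_2=E_{12}$).
\end{itemize}

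The repair is to swap the pairing. $|T_k|$ lives on the initial space of $T_k$, and the almost-orthogonality of those spaces is measured by $T_jT_k^*$. The correct bounds are therefore $\|\sum_k|T_k|\|\le\max_j\sum_k\sqrt{\|T_jT_k^*\|}$ and $\|\sum_k|T_k^*|\|\le\max_j\sum_k\sqrt{\|T_j^*T_k\|}$, the latter using $\||T_j^*|\,|T_k^*|\|=\|T_j^*T_k\|$. The theorem's right-hand side is the product of both maxima, so the final inequality survives once the pairing is corrected.
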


This result was first proved by Cotlar \cite{Cotlar55} for commuting self-adjoint operators and later independently by Cotlar and by Knapp--Stein in the general case; see \cite{KS69,KS71}.
It is frequently used in harmonic analysis to establish $L^2$-boundedness of singular integral operators and pseudodifferential operators
by decomposing them into sums of simpler operators which are almost orthogonal in the sense that the off-diagonal terms $\|T_j^* T_k\|$ and $\|T_j T_k^*\|$ for $j \neq k$ are small.
See for instance \cite[Chapter VII]{Stein93} or \cite[Section 4.5]{Grafakos14}.
A very nice discussion of the Cotlar--Stein lemma can be found in the blog post by Terence Tao \cite{Tao11}.

As explained by Tao \cite{Tao11}, the Cotlar--Stein lemma can be regarded as a non-commutative version of the Schur test for matrices $(a_{jk}) \in M_n(\mathbb{C})$, which states that
\begin{equation}
  \label{eq:schur_test}
  \|(a_{jk})\|^2 \le \Big(\max_{j=1,\ldots,n} \sum_{k=1}^n |a_{jk}| \Big) \Big( {\max_{j=1,\ldots,n} \sum_{k=1}^n |a_{kj}|} \Big).
\end{equation}
For instance, just as the Schur test generalizes the basic operator norm equality $\|\diag(d_1,\ldots,d_n)\| = \max_{j=1,\ldots,n} |d_j|$ for diagonal matrices, the Cotlar--Stein lemma generalizes the basic operator norm equality $\|\sum_{k=1}^n T_k\| = \max_{k=1,\ldots,n} \|T_k\|$ for operators
$T_1,\ldots,T_n$ with pairwise orthogonal ranges (i.e.\ $T_j^* T_k = 0$ for $j \neq k$) and pairwise orthogonal adjoint ranges (i.e.\ $T_j T_k^* = 0$ for $j \neq k$).

This connection becomes quite explicit in light of the following sharpening of \Cref{thm:cotlar-stein} due to Calder\'on and Vaillancourt \cite{CV71,CV72}.

\begin{theorem}
  \label{thm:improved_cotlar_stein}
  Let $T_1,\ldots,T_n$ be bounded linear operators on a Hilbert space. Let $a_{jk} = \sqrt{\|T_j T_k^*\|}$ and $b_{jk} = \sqrt{\|T_j^* T_k\|}$ for $j,k=1,\ldots,n$.
  Then
  \begin{equation*}
    \Big\| \sum_{k=1}^n T_k \Big\|^2 \le \|(a_{jk})\|  \,\|(b_{jk})\|.
  \end{equation*}
\end{theorem}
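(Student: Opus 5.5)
The plan is to factor $T := \sum_{k=1}^n T_k$ as a product $T = AB$ of a row operator $A\colon H^n \to H$ and a column operator $B\colon H \to H^n$, and then to bound $\|A\|^2 = \|A^*A\|$ and $\|B\|^2 = \|BB^*\|$ separately. Each of these is the norm of an $n \times n$ block operator matrix, and we will compare it with the scalar matrices $(b_{jk})$ and $(a_{jk})$.

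\emph{Factorization.} For each $k$, let $T_k = U_k|T_k|$ be the polar decomposition, so that $|T_k^*| = U_k|T_k|U_k^*$ and $U_k|T_k|^{1/2} = |T_k^*|^{1/2}U_k$. Set $A_k = |T_k^*|^{1/2}U_k$ and $B_k = |T_k|^{1/2}$. Then $A_kB_k = U_k|T_k| = T_k$. Let $A = (A_1,\ldots,A_n)$ be the row operator and $B = (B_1,\ldots,B_n)^T$ the column operator. Then $AB = T$, and hence
\begin{equation*}
\|T\|^2 \le \|A\|^2\|B\|^2 = \|A^*A\|\,\|BB^*\|.
\end{equation*}
Here $A^*A = (A_j^*A_k)_{j,k}$ and $BB^* = (B_jB_k^*)_{j,k}$ as block matrices.

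\emph{Block-to-scalar comparison.} For any block operator matrix $X = (X_{jk})$ we have $\|X\| \le \|(\|X_{jk}\|)_{j,k}\|$. Indeed, for $h = (h_k)$ and $g = (g_j)$,
\begin{equation*}
|\langle Xh, g\rangle| \le \sum_{j,k} \|X_{jk}\|\,\|h_k\|\,\|g_j\| \le \|(\|X_{jk}\|)\|\,\|h\|\,\|g\|.
\end{equation*}
Moreover, the norm of a matrix with nonnegative entries is monotone in its entries. So it suffices to prove the entrywise bounds
\begin{equation*}
\|A_j^*A_k\| \le \sqrt{\|T_j^*T_k\|} = b_{jk}, \qquad \|B_jB_k^*\| \le \sqrt{\|T_jT_k^*\|} = a_{jk}.
\end{equation*}

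\emph{Key inequality (main obstacle).} The crux is the following fact: for positive operators $P$ and $Q$,
\begin{equation*}
\|P^{1/2}Q^{1/2}\| \le \|PQ\|^{1/2}.
\end{equation*}
I would prove it directly via the spectral radius $r$:
\begin{equation*}
\|P^{1/2}Q^{1/2}\|^2 = \|Q^{1/2}PQ^{1/2}\| = r(Q^{1/2}PQ^{1/2}) = r(PQ) \le \|PQ\|.
\end{equation*}
This uses that $Q^{1/2}PQ^{1/2}$ is self-adjoint, and that $r(XY) = r(YX)$ applied with $X = Q^{1/2}$ and $Y = PQ^{1/2}$.

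We now apply it to the two families of entries.
\begin{itemize}
\item For the entries of $A^*A$: $\|A_j^*A_k\| = \||T_j^*|^{1/2}|T_k^*|^{1/2}\|$. Since $|T^*| = TU^*$ and hence also $|T^*| = U T^*$, we get $|T_j^*|\,|T_k^*| = U_jT_j^*T_kU_k^*$. This has norm at most $\|T_j^*T_k\|$, which gives $\|A_j^*A_k\| \le b_{jk}$.
\item For the entries of $BB^*$: $\|B_jB_k^*\| = \||T_j|^{1/2}|T_k|^{1/2}\|$. Using $|T| = U^*T = T^*U$, we get $|T_j|\,|T_k| = U_j^*T_jT_k^*U_k$. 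This has norm at most $\|T_jT_k^*\|$, which gives $\|B_jB_k^*\| \le a_{jk}$.
\end{itemize}
Combining these entrywise bounds with the block-to-scalar comparison yields $\|T\|^2 \le \|(b_{jk})\|\,\|(a_{jk})\|$, which is the claimed inequality.
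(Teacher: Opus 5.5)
Your proof is correct and is essentially the paper's argument. Your factorization $\sum_k T_k = AB$ through $\mathcal{H}^n$ is the Gram factorization of the paper's positive block matrix, $P = \begin{bmatrix} B & A^* \end{bmatrix}^* \begin{bmatrix} B & A^* \end{bmatrix}$, with $\|B\|^2 = \|\sum_k |T_k|\|$ and $\|A\|^2 = \|\sum_k |T_k^*|\|$; so your submultiplicativity step is the paper's Cauchy--Schwarz step, your remaining steps (Gram matrix, block-to-scalar comparison, spectral radius) match the paper's lemmas, and you only differ cosmetically in getting $\||T_j|\,|T_k|\| \le \|T_jT_k^*\|$ from polar-decomposition identities rather than proving equality via the $C^*$-identity.
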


\Cref{thm:cotlar-stein} immediately follows from \Cref{thm:improved_cotlar_stein} and the Schur test \eqref{eq:schur_test}
applied to the real symmetric matrices $(a_{jk})$ and $(b_{jk})$.
Simple examples in which the operators are scalars (e.g.\ $T_1 =1$ and $T_2= \ldots = T_n = \frac{1}{n}$)
show that the right-hand side of \Cref{thm:cotlar-stein} can exceed the right-hand side of \Cref{thm:improved_cotlar_stein} by a factor on the order of $n$.
   
The usual proof of \Cref{thm:cotlar-stein} (and of \Cref{thm:improved_cotlar_stein}) relies on the so-called power trick.
The basic idea is that writing $S = \sum_{k=1}^n T_k$ and $M$ for the right-hand side in \Cref{thm:cotlar-stein}, one shows that for all $m \in \mathbb N$,
\[
    \|S\|^{2 m} = \| (S S^*)^m\| \le n M^m.
\]
The desired bound is then obtained by taking $m$-th roots and letting $m \to \infty$, which gets rid
of the factor of $n$.
Our proof instead relies on arguments involving operator matrices.
The necessary ingredients already exist in the literature. In particular, the case where the operators
in \Cref{thm:improved_cotlar_stein} are all positive follows from an inequality of Popovici and Sebesty\'en \cite{PS06}; the general case can be deduced from this case with the help of polar decomposition.

The goal of this note is to explain these ingredients and their relation to the Cotlar--Stein lemma.
We start with a standard inequality.

\begin{lemma}
  \label{lem:ineq_1}
  Let $A,B$ be positive operators on a Hilbert space. Then $\|A^{1/2} B^{1/2}\| \le \|A B\|^{1/2}$.
\end{lemma}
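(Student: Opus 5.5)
We reduce the claim to a statement about spectral radii. Write $X = A^{1/2}B^{1/2}$. By the $C^*$-identity,
\[
\|X\|^2 = \|X^*X\| = \|B^{1/2} A B^{1/2}\|.
\]
The operator $B^{1/2}AB^{1/2}$ is positive, so its norm equals its spectral radius $r(B^{1/2}AB^{1/2})$. It therefore suffices to show
\[
r(B^{1/2}AB^{1/2}) \le \|AB\|.
\]

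To get this, we use that $\sigma(XY)\cup\{0\} = \sigma(YX)\cup\{0\}$ for bounded operators $X,Y$, so that $r(XY)=r(YX)$. Apply this with $X = B^{1/2}$ and $Y = AB^{1/2}$. This gives
\[
r(B^{1/2}AB^{1/2}) = r(AB^{1/2}B^{1/2}) = r(AB).
\]
Finally, the spectral radius is bounded by the norm, so $r(AB)\le \|AB\|$. Combining these,
\[
\|A^{1/2}B^{1/2}\|^2 \le \|AB\|,
\]
which is the claim after taking square roots.

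There is no real obstacle. The only step requiring justification is the equality $r(XY)=r(YX)$. It follows from the standard fact that $1-XY$ is invertible if and only if $1-YX$ is, with inverse $1 + Y(1-XY)^{-1}X$, applied after rescaling by nonzero scalars $\lambda$. Alternatively, it follows from Gelfand's formula, since $\|(YX)^{m+1}\| \le \|Y\|\,\|(XY)^m\|\,\|X\|$.

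We also note a power-trick-free alternative. Using the interpolation (Heinz) inequality $\|A^{t}B^{t}\|\le \|AB\|^{t}$ for $t\in[0,1]$ with $t=1/2$ gives the claim directly. We prefer the spectral radius argument, since it is self-contained.
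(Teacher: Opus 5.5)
Your proof is correct and is essentially the paper's argument: the $C^*$-identity, then norm equals spectral radius for the positive operator $B^{1/2}AB^{1/2}$, then $r(XY)=r(YX)$ to pass to $r(AB)$, then $r(AB)\le\|AB\|$. The only difference is that you also justify $r(XY)=r(YX)$, which the paper takes as a standard fact.
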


\begin{proof}
  We use the $C^*$-identity, the agreement of spectral radius $r$ and norm for self-adjoint operators, and the fact that
  $r(XY) = r(YX)$ for any two operators $X,Y$ to find that
  \begin{equation*}
    \|A^{1/2} B^{1/2}\|^2 = \|B^{1/2} A B^{1/2}\| = r(B^{1/2} A B^{1/2}) = r(A B) \le \|A B\|. \qedhere
  \end{equation*}
\end{proof}

The following inequality, which is \Cref{thm:improved_cotlar_stein} for positive (but not necessarily commuting) operators, was essentially shown by Popovici and Sebesty\'en
\cite[Remark 2.4]{PS06}; we provide the short argument.
\begin{lemma}
  \label{lem:positive}
  For positive operators $A_1,\ldots,A_n$ on $\mathcal H$, we have
  \begin{equation*}
    \Big\| \sum_{k=1}^n A_k \Big\| \le \big\| ( \|A_j A_k\|^{1/2} ) \big\|.
  \end{equation*}
\end{lemma}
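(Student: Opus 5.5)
Write $A_k = C_k^* C_k$ with $C_k = A_k^{1/2}$, and consider the row operator $R = (A_1^{1/2}, \ldots, A_n^{1/2}) \colon \mathcal H^n \to \mathcal H$. Then
\[
  RR^* = \sum_{k=1}^n A_k ,
\]
so $\|\sum_k A_k\| = \|RR^*\| = \|R^*R\|$. Here $R^*R$ is the operator matrix $G = (A_j^{1/2} A_k^{1/2})_{j,k=1}^n$ acting on $\mathcal H^n$. The plan is therefore to show that $\|G\|$ is bounded by the norm of the scalar matrix $(\|A_j^{1/2}A_k^{1/2}\|)_{j,k}$. By \Cref{lem:ineq_1}, each entry of that matrix satisfies $\|A_j^{1/2}A_k^{1/2}\| \le \|A_jA_k\|^{1/2}$, so the entries are dominated entrywise by those of $(\|A_jA_k\|^{1/2})$.

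The key step is a general fact about operator matrices. For any operator matrix $X = (X_{jk})$ on $\mathcal H^n$, we have $\|X\| \le \|(\|X_{jk}\|)\|$. To see this, take unit vectors $x = (x_k)$ and $y = (y_j)$ in $\mathcal H^n$ and estimate
\[
  |\langle Xx, y\rangle| \le \sum_{j,k} \|X_{jk}\| \, \|x_k\| \, \|y_j\| = \langle (\|X_{jk}\|) \xi, \eta \rangle ,
\]
where $\xi = (\|x_k\|)$ and $\eta = (\|y_j\|)$ are unit vectors in $\mathbb{C}^n$. The right-hand side is at most the norm of the scalar matrix. Applying this to $X = G$ gives
\[
  \Big\|\sum_k A_k\Big\| \le \big\| (\|A_j^{1/2}A_k^{1/2}\|) \big\| .
\]

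It remains to pass from entrywise domination to a norm inequality. For entrywise nonnegative matrices $0 \le c_{jk} \le d_{jk}$, we have $\|(c_{jk})\| \le \|(d_{jk})\|$. This holds because the norm of a nonnegative matrix is attained at nonnegative vectors: $|\langle C\xi, \eta\rangle| \le \langle C|\xi|, |\eta|\rangle$, and the latter expression is monotone in the entries of $C$. Chaining this with \Cref{lem:ineq_1} and the previous display yields the claimed bound.

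I do not expect a serious obstacle here. The only step requiring care is recognizing that $\sum_k A_k$ factors as $RR^*$, so that its norm equals that of the Gram-type operator matrix $R^*R$. After that, the argument is the elementary scalar-majorization estimate for operator matrices combined with monotonicity of the norm for nonnegative matrices.
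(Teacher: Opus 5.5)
Your proof is correct and follows the paper's argument: write $\sum_k A_k = RR^*$ via the row operator $R$, pass to $R^*R = (A_j^{1/2}A_k^{1/2})$, compare with the scalar matrix of entry norms, and finish with \Cref{lem:ineq_1}. You also prove the two elementary facts the paper leaves implicit, namely the norm comparison for operator matrices and the monotonicity of the norm on entrywise nonnegative matrices, and both verifications are correct.
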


\begin{proof}
  Let
  \[
  R =
  \begin{bmatrix}
    A_1^{1/2} & A_2^{1/2} & \ldots & A_n^{1/2}
  \end{bmatrix}: \mathcal H^n \to \mathcal H, \quad (x_1,\ldots,x_n) \mapsto \sum_{k=1}^n A_k^{1/2} x_k,
  \]
  be the row operator. One can calculate with operator matrices as one does with block matrices (cf.\ \cite[Chapter 8]{Halmos82}), so
  \begin{equation*}
    \Big\| \sum_{k=1}^n A_k \Big\| = \|R R^*\| = \|R^* R\| = \big\| ( A_j^{1/2} A_k^{1/2} ) \big\|
    \le \big\| ( \|A_j^{1/2} A_k^{1/2}\| ) \big\|,
  \end{equation*}
  where the last step uses the elementary norm comparison of an operator matrix
  with the scalar matrix formed from the norms of its entries.
  The proof is completed by an application of \Cref{lem:ineq_1}.
\end{proof}

To extend the result to general operators, we use polar decomposition. We write $|T| = (T^* T)^{1/2}$.

\begin{lemma}
  \label{lem:absolute_value_bound}
  Let $T_1,\ldots,T_n$ be bounded linear operators on a Hilbert space. Then
  \begin{equation*}
    \Big\| \sum_{k=1}^n T_k \Big\|^2 \le \Big\| \sum_{k=1}^n |T_k| \Big\| \Big\| \sum_{k=1}^n |T_k^*| \Big\|.
  \end{equation*}
\end{lemma}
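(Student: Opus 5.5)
We will use the polar decomposition $T_k = V_k |T_k|$, where $V_k$ is a partial isometry. Two standard facts about it are needed: $|T_k^*| = V_k |T_k| V_k^*$, and $T_k = |T_k^*|^{1/2} V_k |T_k|^{1/2}$. The second identity holds because $V_k|T_k|^{1/2}V_k^* = |T_k^*|^{1/2}$; this follows from functional calculus together with $V_k^*V_k$ being the projection onto $\overline{\operatorname{ran}}|T_k|$. The idea is to factor $\sum_k T_k$ as a product of three operator matrices, namely a row, a diagonal and a column, and then bound each factor separately.

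Concretely, let
\[
R = \begin{bmatrix} |T_1^*|^{1/2} & \cdots & |T_n^*|^{1/2} \end{bmatrix} : \mathcal H^n \to \mathcal H, \qquad
D = \diag(V_1,\ldots,V_n) : \mathcal H^n \to \mathcal H^n,
\]
and let $C : \mathcal H \to \mathcal H^n$ be the column operator $x \mapsto (|T_1|^{1/2}x,\ldots,|T_n|^{1/2}x)$. Multiplying operator matrices as in the proof of \Cref{lem:positive} gives
\[
RDC = \sum_{k=1}^n |T_k^*|^{1/2} V_k |T_k|^{1/2} = \sum_{k=1}^n T_k ,
\]
and therefore
\[
\Big\|\sum_k T_k\Big\| \le \|R\|\,\|D\|\,\|C\| .
\]

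Each factor is then estimated directly. The diagonal factor satisfies $\|D\| = \max_k \|V_k\| \le 1$, since each $V_k$ is a partial isometry. For the row, $\|R\|^2 = \|RR^*\| = \|\sum_k |T_k^*|\|$. For the column, $\|C\|^2 = \|C^*C\| = \|\sum_k |T_k|\|$. Squaring the inequality above and inserting these three bounds yields exactly the claim of \Cref{lem:absolute_value_bound}.

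The main obstacle is the identity $T_k = |T_k^*|^{1/2} V_k |T_k|^{1/2}$. I would verify it as follows. First, $V_k |T_k| V_k^* = |T_k^*|$, because its square is $V_k|T_k|^2V_k^* = T_kT_k^*$ and the operator is positive. Next, $W = V_k|T_k|^{1/2}V_k^*$ is positive, and its square is $V_k|T_k|^{1/2}V_k^*V_k|T_k|^{1/2}V_k^* = V_k|T_k|V_k^*$. Hence $W = |T_k^*|^{1/2}$ by uniqueness of the positive square root. Finally,
\[
W V_k |T_k|^{1/2} = V_k |T_k|^{1/2} (V_k^*V_k) |T_k|^{1/2} = V_k |T_k| = T_k ,
\]
where we used that $V_k^*V_k$ acts as the identity on the range of $|T_k|^{1/2}$.
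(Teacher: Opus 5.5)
Your proof is correct, but it takes a somewhat different route from the paper. The paper shows that each $2\times 2$ operator matrix $\begin{bmatrix} |T_k| & T_k^* \\ T_k & |T_k^*| \end{bmatrix}$ is positive and sums these to get the positive matrix $P$. It then applies the Cauchy--Schwarz inequality for $P$ to obtain the mixed Schwarz inequality \eqref{eqn:mixed_schwarz}. You instead write $\sum_k T_k$ as row $\times$ contraction $\times$ column and use submultiplicativity of the norm, with no positivity argument at all.

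The two arguments are closely linked. Since $|T_k^*|^{1/2}V_k = V_k|T_k|^{1/2}$, your row $RD$ equals $\begin{bmatrix} V_1|T_1|^{1/2} & \cdots & V_n|T_n|^{1/2}\end{bmatrix}$. With $X = \begin{bmatrix} C^* \\ RD \end{bmatrix}$ one has exactly $P = XX^*$, which is the paper's factorization summed over $k$.

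This also shows that the step you single out as the main obstacle can be bypassed. Take the row $R' = \begin{bmatrix} V_1|T_1|^{1/2} & \cdots & V_n|T_n|^{1/2}\end{bmatrix}$ directly. Then $R'C = \sum_k V_k|T_k| = \sum_k T_k$ and $R'(R')^* = \sum_k V_k|T_k|V_k^* = \sum_k |T_k^*|$. So, as in the paper, only $|T^*| = V|T|V^*$ is needed; neither uniqueness of square roots nor the diagonal factor $D$ enters. If you keep your version, note that $V_k^*V_k$ fixing $\operatorname{ran}|T_k|^{1/2}$ relies on $\ker|T_k|^{1/2} = \ker|T_k|$.

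As for what each approach buys: the paper's formulation produces the vector inequality \eqref{eqn:mixed_schwarz}. That inequality drives the convergence argument for infinite sums in the Remark and connects the lemma to the mixed Schwarz inequality literature. Your factorization gives the same inequality just as cheaply, via $|\langle RDCx,y\rangle| \le \|Cx\|\,\|R^*y\|$ with $\|Cx\|^2 = \langle \sum_k |T_k|x,x\rangle$ and $\|R^*y\|^2 = \langle \sum_k |T_k^*|y,y\rangle$. So nothing is lost, and the norm bound itself becomes a one-line estimate.
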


\begin{proof}
  Let $T$ be any operator with polar decomposition $T = V |T|$, where $V$ is a partial isometry.
 Then $|T^*| = V |T| V^*$ and so
  \begin{equation*}
    \begin{bmatrix}
      |T| & T^* \\
      T & |T^*|
    \end{bmatrix}
    =
    \begin{bmatrix}
      |T|^{1/2} \\ V |T|^{1/2}
    \end{bmatrix}
    \begin{bmatrix}
      |T|^{1/2} & |T|^{1/2} V^*
    \end{bmatrix}
  \end{equation*}
  is positive. Hence so is
  \begin{equation*}
  P:=
    \begin{bmatrix}
      \sum_{k=1}^n |T_k| & \sum_{k=1}^n T_k^* \\
      \sum_{k=1}^n T_k & \sum_{k=1}^n |T_k^*|
    \end{bmatrix}.
  \end{equation*}
  The Cauchy--Schwarz inequality for this positive operator matrix, namely
  \[
  \Big| \Big\langle P \vectwo{x}{0}, \vectwo{0}{y} \Big\rangle \Big|^2 \le \Big\langle P \vectwo{x}{0}, \vectwo{x}{0} \Big\rangle \Big\langle P \vectwo{0}{y}, \vectwo{0}{y} \Big\rangle,
  \]
  implies that
  \begin{equation}
  \label{eqn:mixed_schwarz}
    \Big|\Big\langle \sum_{k=1}^n T_k x,y  \Big\rangle \Big|^2 \le  \Big\langle \sum_{k=1}^n |T_k| x,x \Big\rangle \Big\langle \sum_{k=1}^n |T_k^*| y,y \Big\rangle
  \end{equation}
  for all vectors $x,y$, which yields the lemma.
\end{proof}

We need one last lemma.

\begin{lemma}
  \label{lem:norm_abs_value}
  Let $T,S$ be bounded linear operators on a Hilbert space. Then $\| |T| |S| \| = \|T S^*\|$.
\end{lemma}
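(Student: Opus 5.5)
We will prove the identity by applying the $C^*$-identity twice, once to $X = |T|\,|S|$ and once to $Y = TS^*$, and showing that $X^*X$ and $Y^*Y$ are unitarily related through the partial isometry in the polar decomposition of $T$.

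First, compute
\[
\| |T| |S| \|^2 = \| (|T||S|)^* |T||S| \| = \| |S| \, T^*T \, |S| \|,
\]
using $|T|^2 = T^*T$. Second, compute
\[
\|TS^*\|^2 = \|(TS^*)^*TS^*\| = \|S T^* T S^*\|.
\]
It therefore suffices to show $\| |S| T^*T |S| \| = \|S T^*T S^*\|$.

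To compare these, write the polar decomposition $S = W|S|$, where $W$ is a partial isometry whose initial space contains the closure of the range of $|S|$. Then $S^* = |S| W^*$, and
\[
S T^*T S^* = W \big( |S| T^*T |S| \big) W^*.
\]
Put $Z = |S| T^*T |S|$. The inequality $\|WZW^*\| \le \|Z\|$ is immediate, since $\|W\| \le 1$.

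For the reverse inequality, recall that $W^*W$ is the projection onto $\overline{\operatorname{ran}|S|}$. This projection acts as the identity on the range of $|S|$, so $W^*W|S| = |S|$, and taking adjoints gives $|S|W^*W = |S|$. Hence
\[
W^*(WZW^*)W = W^*W \,|S| T^*T |S|\, W^*W = Z,
\]
and so $\|Z\| \le \|WZW^*\|$. Combining the two inequalities gives $\|Z\| = \|WZW^*\|$, which is the required equality.

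The only step needing care is the identity $W^*W|S| = |S|$, which comes from the standard properties of the polar decomposition. Everything else is routine.

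An alternative route avoids the polar decomposition. One checks directly that $\| |S| x \| = \|Sx\|$ for all $x$, which gives $\| |T||S| \| = \| S|T| \|$ by adjointness. Applying the same fact with $S$ replaced by $T$ then gives $\|S|T|\| = \|S T^*\|$. Taking adjoints once more yields the statement.
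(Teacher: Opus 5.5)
Your main argument is correct, but it takes a detour that the paper avoids. The paper's proof is a single chain of $C^*$-identities,
\[
\| |T| |S| \|^2 = \| |S| T^* T |S| \| = \| T |S|^2 T^* \| = \| T S^* S T^* \| = \| T S^* \|^2 .
\]
The middle equality is $\|X^*X\| = \|XX^*\|$ for $X = T|S|$, which produces $|S|^2 = S^*S$ in the middle so it can be replaced directly. You instead expanded $\|TS^*\|^2$ as $\|(TS^*)^*(TS^*)\| = \|ST^*TS^*\|$. That puts $S$ and $S^*$ on the outside rather than $S^*S$ in the middle, which is why you then need the polar decomposition $S = W|S|$ and the identity $W^*W|S| = |S|$ to show $\|WZW^*\| = \|Z\|$. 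You carry that step out correctly. However, expanding instead as $\|(TS^*)(TS^*)^*\| = \|T|S|^2T^*\|$ would have made the partial isometry unnecessary. The paper's route is also purely algebraic and works verbatim in any $C^*$-algebra, whereas yours relies on the polar decomposition, which is not available in a general $C^*$-algebra. Your alternative route, $\| |T||S| \| = \| |S||T| \| = \| S|T| \| = \| |T|S^* \| = \| TS^* \|$, uses $\| |S|x \| = \|Sx\|$ and $\|X\| = \|X^*\|$. It is correct and just as economical as the paper's, resting on the same fact $|S|^2 = S^*S$ applied to vectors. Your account of where the adjoints are taken is slightly loose: going from $\|S|T|\|$ to $\|TS^*\|$ already requires passing to $(S|T|)^* = |T|S^*$. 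The chain written out above is what you need.
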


\begin{proof}
  This is a repeated application of the $C^*$-identity:
  \begin{equation*}
    \| |T| |S| \|^2 = \| |S| T^* T |S| \| = \| T |S|^2 T^* \| = \| T S^* S T^* \| = \| T S^* \|^2. \qedhere
  \end{equation*}
  
\end{proof}

We are now ready to prove \Cref{thm:improved_cotlar_stein}.

\begin{proof}[Proof of \Cref{thm:improved_cotlar_stein}]
  Let $A_j = |T_j|$ and $B_j = |T_j^*|$ for $j=1,\ldots,n$. Then $A_j$ and $B_j$ are positive operators and $\|A_j A_k\|^{1/2} = \|T_j T_k^*\|^{1/2} = a_{jk}$ and $\|B_j B_k\|^{1/2} = \|T_j^* T_k\|^{1/2} = b_{jk}$ for $j,k=1,\ldots,n$ by \Cref{lem:norm_abs_value}.
  Hence \Cref{lem:absolute_value_bound} and \Cref{lem:positive} imply that
  \begin{equation*}
    \Big\| \sum_{k=1}^n T_k \Big\|^2 \le \Big\| \sum_{k=1}^n A_k \Big\| \Big\| \sum_{k=1}^n B_k \Big\| \le \|(a_{jk})\|  \,\|(b_{jk})\|. \qedhere
  \end{equation*}
\end{proof}

\begin{remark}
  \begin{enumerate}
\item The proof of \Cref{thm:improved_cotlar_stein} gives the smaller, but somewhat more complicated bound
  \begin{equation*}
    \Big\| \sum_{k=1}^n T_k \Big\|^2 \le \Big\| \big( \| |T_j|^{1/2} |T_k|^{1/2} \| \big) \Big\|  \,\Big\| \big( \| |T_j^*|^{1/2} |T_k^*|^{1/2} \| \big) \Big\|.
  \end{equation*}
  \item The Cotlar--Stein lemma extends to infinite sums $\sum_{k=1}^\infty T_k$; the same is true for \Cref{thm:improved_cotlar_stein}: If $(a_{jk})$ and $(b_{jk})$ are bounded operators on $\ell^2$, then the sum $\sum_{k=1}^\infty T_k$ converges in the strong operator topology to an operator of norm at
  most $C:=\sqrt{\| (a_{jk})\| \|(b_{jk})\|}$.
  This can be seen from our proof as follows.
  
  For each $n$, \Cref{thm:improved_cotlar_stein} implies that $\|\sum_{k=1}^n T_k\| \le C$,
  so it suffices to establish convergence. \Cref{lem:positive} and \Cref{lem:norm_abs_value} show that
  $\| \sum_{k=1}^n |T_k| \| \| \sum_{k=1}^n |T_k^*| \| \le C^2$,
  hence the two factors are individually bounded since they are increasing in $n$.
  In particular, $\sum_{k=1}^\infty \langle |T_k| x, x \rangle < \infty$
  for each vector $x$.
  Now inequality
  \eqref{eqn:mixed_schwarz} implies that
  \[
    \Big\| \sum_{k=m}^n T_k x \Big\|^2 \le  \sum_{k=m}^n \langle |T_k| x, x \rangle \, \Big\| \sum_{k=m}^n |T_k^*| \Big\|,
  \]
  hence $(\sum_{k=1}^n T_k x)$ is a Cauchy sequence.

  Alternatively, one can use \Cref{thm:improved_cotlar_stein}
  and the following general Hilbert space lemma: If $\| \sum_k \epsilon_k T_k \| \le C$ for every finitely supported sequence $(\epsilon_k)$ of modulus at most $1$, then $\sum_k T_k$ converges in the strong operator topology
  to an operator of norm at most $C$;
  see \cite[p.\ 318]{Stein93} or the proof of \cite[Lemma 4.5.1]{Grafakos14} and also \cite[Section 2.4]{AK06} for the broader Banach space context.
\item The key inequality \eqref{eqn:mixed_schwarz} in the proof of \Cref{lem:absolute_value_bound}, at least for a single operator, is sometimes known as the mixed Schwarz inequality, cf.\ \cite[Problem 138]{Halmos82} and \cite[p. 420]{Heinz51}. A result more general than \Cref{lem:absolute_value_bound} was proved by Kittaneh \cite[Theorem 2]{Kittaneh99}.
\end{enumerate}
\end{remark}

\textbf{AI Disclosure:} ChatGPT was used to perform literature search and preliminary computations
and for proofreading.
Github Copilot was used for autocomplete. Aside from this, the note is human-generated.

\bibliographystyle{amsplain}
\bibliography{cotlar-stein}

@Article{Cotlar55,
  author     = {Cotlar, M.},
  journal    = {Rev. Mat. Cuyana},
  title      = {A combinatorial inequality and its applications to {$L\sp 2$}-spaces},
  year       = {1955},
  issn       = {0484-7822},
  pages      = {41--55 (1956)},
  volume     = {1},
  fjournal   = {Revista Matem\'{a}tica Cuyana},
  mrclass    = {46.1X},
  mrnumber   = {80263},
  mrreviewer = {Edwin\ Hewitt},
}

@Article{KS71,
  author     = {Knapp, A. W. and Stein, E. M.},
  journal    = {Ann. of Math. (2)},
  title      = {Intertwining operators for semisimple groups},
  year       = {1971},
  issn       = {0003-486X},
  pages      = {489--578},
  volume     = {93},
  doi        = {10.2307/1970887},
  fjournal   = {Annals of Mathematics. Second Series},
  mrclass    = {22E45},
  mrnumber   = {460543},
  mrreviewer = {G.\ I.\ Ol\cprime shanski\u{\i}},
  url        = {https://doi.org/10.2307/1970887},
}

@Article{Kittaneh99,
  author     = {Fuad Kittaneh},
  journal    = {Canadian Mathematical Bulletin},
  title      = {Some Norm Inequalities for Operators},
  volume     = {42},
  number     = {1},
  pages      = {87--96},
  year       = {1999},
  doi        = {10.4153/CMB-1999-010-6}
}

@Article{KS69,
  author     = {Knapp, A. W. and Stein, E. M.},
  journal    = {Proc. Nat. Acad. Sci. U.S.A.},
  title      = {Singular integrals and the principal series. {I}, {II}},
  year       = {1969},
  issn       = {0027-8424},
  pages      = {281--284; ibid. 66 (1969), 13--17},
  volume     = {63},
  doi        = {10.1073/pnas.66.1.13},
  fjournal   = {Proceedings of the National Academy of Sciences of the United States of America},
  mrclass    = {22.60},
  mrnumber   = {263989},
  mrreviewer = {G.\ Warner},
  url        = {https://doi.org/10.1073/pnas.66.1.13},
}

@Book{Stein93,
  author     = {Stein, Elias M.},
  publisher  = {Princeton University Press, Princeton, NJ},
  title      = {Harmonic analysis: real-variable methods, orthogonality, and oscillatory integrals},
  year       = {1993},
  isbn       = {0-691-03216-5},
  note       = {With the assistance of Timothy S. Murphy, Monographs in Harmonic Analysis, III},
  series     = {Princeton Mathematical Series},
  volume     = {43},
  mrclass    = {42-02 (35Sxx 43-02 47G30)},
  mrnumber   = {1232192},
  mrreviewer = {Michael\ Cowling},
  pages      = {xiv+695},
}

@Unpublished{Tao11,
  author = {Terence Tao},
  note   = {https://terrytao.wordpress.com/2011/05/25/the-cotlar-stein-lemma/},
  title  = {The {C}otlar-{S}tein lemma},
  year   = {2011},
}

@Book{Grafakos14,
  author     = {Grafakos, Loukas},
  publisher  = {Springer, New York},
  title      = {Modern {F}ourier analysis},
  year       = {2014},
  edition    = {Third},
  isbn       = {978-1-4939-1229-2; 978-1-4939-1230-8},
  series     = {Graduate Texts in Mathematics},
  volume     = {250},
  doi        = {10.1007/978-1-4939-1230-8},
  mrclass    = {42-01 (42Bxx)},
  mrnumber   = {3243741},
  mrreviewer = {Atanas G. Stefanov},
  pages      = {xvi+624},
}

@Article{PS06,
  author     = {Popovici, Dan and Sebesty\'{e}n, Zolt\'{a}n},
  journal    = {J. Operator Theory},
  title      = {Norm estimations for finite sums of positive operators},
  year       = {2006},
  issn       = {0379-4024,1841-7744},
  number     = {1},
  pages      = {3--15},
  volume     = {56},
  fjournal   = {Journal of Operator Theory},
  mrclass    = {47B65 (47A30)},
  mrnumber   = {2261609},
  mrreviewer = {Jin-Chuan\ Hou},
}

@article {Heinz51,
    AUTHOR = {Heinz, Erhard},
     TITLE = {Beitr\"{a}ge zur {S}t\"{o}rungstheorie der
              {S}pektralzerlegung},
   JOURNAL = {Math. Ann.},
  FJOURNAL = {Mathematische Annalen},
    VOLUME = {123},
      YEAR = {1951},
     PAGES = {415--438},
      ISSN = {0025-5831,1432-1807},
   MRCLASS = {46.3X},
  MRNUMBER = {44747},
MRREVIEWER = {F.\ H.\ Brownell},
       DOI = {10.1007/BF02054965},
       URL = {https://doi.org/10.1007/BF02054965},
}

@Book{Halmos82,
  Title                    = {A {H}ilbert space problem book},
  Author                   = {Halmos, Paul Richard},
  Publisher                = {Springer-Verlag},
  Year                     = {1982},

  Address                  = {New York},
  Edition                  = {Second},
  Note                     = {Encyclopedia of Mathematics and its Applications, 17},
  Series                   = {Graduate Texts in Mathematics},
  Volume                   = {19},
  ISBN                     = {0-387-90685-1},
  Mrclass                  = {47-01 (46-01)},
  Mrnumber                 = {675952 (84e:47001)},
  Mrreviewer               = {J. Weidmann},
  Pages                    = {xvii+369}
}

@Book{AK06,
  Title                    = {Topics in {B}anach space theory},
  Author                   = {Albiac, Fernando and Kalton, Nigel J.},
  Publisher                = {Springer},
  Year                     = {2006},

  Address                  = {New York},
  Series                   = {Graduate Texts in Mathematics},
  Volume                   = {233},
  ISBN                     = {978-0387-28141-4; 0-387-28141-X},
  Mrclass                  = {46B20 (46-01)},
  Mrnumber                 = {2192298 (2006h:46005)},
  Mrreviewer               = {Gilles Godefroy},
  Pages                    = {xii+373}
}

@Article{CV71,
  author     = {Calder\'on, Alberto-P. and Vaillancourt, R\'emi},
  journal    = {J. Math. Soc. Japan},
  title      = {On the boundedness of pseudo-differential operators},
  year       = {1971},
  issn       = {0025-5645,1881-1167},
  pages      = {374--378},
  volume     = {23},
  doi        = {10.2969/jmsj/02320374},
  fjournal   = {Journal of the Mathematical Society of Japan},
  mrclass    = {47.70 (35.00)},
  mrnumber   = {284872},
  mrreviewer = {F.\ Cardoso},
}

@Article{CV72,
  author     = {Calder\'on, Alberto-P. and Vaillancourt, R\'emi},
  journal    = {Proc. Nat. Acad. Sci. U.S.A.},
  title      = {A class of bounded pseudo-differential operators},
  year       = {1972},
  issn       = {0027-8424},
  pages      = {1185--1187},
  volume     = {69},
  doi        = {10.1073/pnas.69.5.1185},
  fjournal   = {Proceedings of the National Academy of Sciences of the United States of America},
  mrclass    = {47G05 (35S05)},
  mrnumber   = {298480},
  mrreviewer = {F.\ Cardoso},
}

\end{document}